\newtheorem{lemma}{Lemma}
\newcommand{\lp}[1]{\left( \begin{array}{#1} }
\newcommand{\rp}{\end{array} \right)}
\begin{document}

\runningheads{A weighted transmuted exponential distribution with environmental applications}{C. Chesneau, H. S. Bakouch and M. N. Khan}

\title{A weighted transmuted exponential distribution with environmental applications}
%\footnote[2]{This
%work was partly supported by Chinese NSF grant (NO. 10771162)}}

\author{Christophe Chesneau \affil{1}$^,$\corrauth,
   Hassan S. Bakouch \affil{2}, Muhammad Nauman Khan \affil{3}
 }

\address{
\affilnum{1}Department of Mathematics, LMNO, University of Caen Normandy, France. \\
\affilnum{2}Department of Mathematics, Faculty of Science, Tanta University, Tanta, Egypt.\\
 \affilnum{3}Institute of Numerical Sciences, Kohat University of Science and Technology, Kohat, 26000, Pakistan.\\
}

\corraddr{Christophe Chesneau (Email: christophe.chesneau@unicaen.fr). Department of Mathematics, LMNO, University of Caen Normandy, France.}

\begin{abstract}
In this paper, we introduce a new three-parameter distribution  based on the combination of  re-parametrization of the so-called EGNB2 and  transmuted exponential distributions. This combination aims to modify the transmuted exponential distribution via the incorporation of an additional parameter, mainly adding a high degree of flexibility on the mode and impacting the skewness and kurtosis of the tail. We explore some mathematical  properties of this distribution including the hazard rate function, moments, the moment generating function, the quantile function, various entropy measures and (reversed) residual life functions. A statistical study  investigates estimation of the parameters using the method of maximum likelihood. The distribution along with other existing distributions are fitted to two environmental data sets and its superior performance is assessed by using some goodness-of-fit tests. As a result, some environmental measures associated with these data are obtained such as the return level and mean deviation about this level.
\end{abstract}

\keywords{Conditional moments, Estimation, Statistical distributions, Generating function, Return period.}

\maketitle
\noindent{\bf AMS 2010 subject classifications} 60E05, 62E15.

\section{Introduction}

The precise analysis of a wide variety of data sets is limited by the use of models based on the classical distributions (normal, exponential, logistic\ldots). For instance, the analysis of environmental data sets collecting from observations of complex natural phenomena needs special treatments to reveal all the underlying  informations. Over the last decades, numerous solutions have been provided by the statisticians, including the elaboration of several methods which aim to increase the flexibility of the former classical distributions. Among these methods, a popular one that aims to construct a generator of distributions by compounding continuous distributions with well-known discrete distributions. This compounding is always  motivated by practical problems as those involving cdf of minimum or maximum of several independent and identically random variables. An exhaustive survey on the construction of such generators, with the presentation of new ones, can be found in \cite{tahir}, and the references therein. Among the long list, let us briefly present the EGNB2 distribution introduced by \cite[Remark 2 (ii)]{tahir}. Using a cumulative distribution function (cdf) $G(x)$, the general form of the associated cdf is given by
\begin{eqnarray}\label{f1}
F_{EGNB2}(x)=\frac{\left[ 1 + \eta   \upsilon G(x)^{\alpha} \right]^{-\frac{1}{\eta}}-1}{(1+\eta   \upsilon)^{-\frac{1}{\eta}}-1}.
\end{eqnarray}
The EGNB2 distribution  can be viewed as an extension of the G-negative binomial families introduced by \cite{louzada} and \cite{percon}. It enjoys remarkable theoretical and practical properties.

In this study, we consider a particular case of this EGNB2 distribution consisting in a re-parametrization for the parameters $\alpha$, $\eta$ and $\upsilon$ appearing in \eqref{f1} as described below.  Let $\gamma>0$, $\eta=-\frac{\gamma}{\gamma+1}$,  $\upsilon=-(\gamma+1)$ and $\alpha=1$. That yields a cdf of the (simple) form:
\begin{eqnarray}\label{f2}
F(x)=\frac{\left[ 1+ \gamma   G(x)\right] ^{\frac{1}{\gamma }+1}-1}{\left( 1+ \gamma \right) ^{\frac{1}{\gamma }+1}-1}.
\end{eqnarray}
Let us now explain the importance of this re-parametrization of \eqref{f1}, with some statistical features. 
One can observe that $F(x)$ as the following integral form: $F(x)=\int_{0}^{G(x)}p(t) \mathrm{d}t$, where $p(t)$ denotes the pdf: $p(t)=\frac{\gamma+1}{\left( 1+ \gamma \right) ^{\frac{1}{\gamma }+1}-1}(1+\gamma  t)^{\frac{1}{\gamma}}$. So it reveals to be a new particular case of the T-X family cdf introduced by \cite{alzaa}. Another remark is that, when $G(x)\rightarrow 0$, we have $F(x)\sim \frac{\gamma+1}{(1+\gamma)^{\frac{1}{\gamma}+1}-1}G(x)$ and when $\gamma\rightarrow 0$, we have $F(x)\sim \frac{\mathrm{e}^{G(x)}-1}{e-1}$. This transformation of cdf corresponds to the one proposed in \cite{kumar1}. All the resulting distributions have demonstrated nice properties in terms of analysis of real life data sets. 
Furthermore, let us observe that the probability density function (pdf) associated to \eqref{f2} is given by
\begin{equation*}
f(x)=\frac{(\gamma+1)\left[ 1+ \gamma  G(x)\right] ^{\frac{1}{\gamma }}g(x)}{\left( 1+ \gamma \right) ^{\frac{1}{\gamma }+1}-1}.
\end{equation*}
Note that we can also express it as a weighted pdf: $f(x)=c w(x) g(x)$, where $w(x)=\left[1 + \gamma   G(x)\right]^{\frac{1}{\gamma}}$ is a weight function and $c=\frac{\gamma +1}{\left( 1+\gamma \right) ^{\frac{1}{\gamma }+1}-1}
$ is a normalizing constant. It thus belongs to the family of weighted distributions. Further details on such family of distributions can be found in  \cite{saphir}.  On the other side, \cite{Aryal and Tsokos:2011} introduced the transmuted exponential distribution defined by the following cdf:   $G(x)=(\theta+1)H(x)-\theta[H(x)]^2$, $\theta \in [-1,1]$, where $H(x)$ denotes the cdf of the exponential distribution. Then, it is proved that the additional parameter $\theta$ can significantly increase the flexibility of the former exponential distribution,  demonstrating a superiority in terms of fit in comparison to the former exponential distribution. We may refer the reader to \cite{ow}, and the references therein. 

 In this paper, we introduce a new three-parameter distribution which combines the features of the distribution characterized by \eqref{f2} and  the transmuted exponential distribution. This combination aims to modify the former transmuted exponential distribution by incorporating the parameter $\gamma$ and takes benefit of the flexibility of the EGNB2 distribution. Its main role is to add a high degree of flexibility on the mode, and the skewness and kurtosis of the tail. We thus obtain a very flexible distribution, which opens new perspectives in terms of the construction of statistical models for data analysis. The theoretical and practical aspects are explored in an exhaustive way. The theoretical ones include expansions of the cdf, pdf, hazard rate function (hrf), quantile function, moments, moment generating function, various entropy measures, residual life functions, conditional moments, mean deviations and reversed residual life function. We investigate the estimation of its parameters via the maximum likelihood method. Two real-life data sets in environmental sciences are analyzed to show its superior performance in terms of fit in comparison to well-known distributions: The gamma distribution, the Marshal-Olkin exponential distribution  \cite{Marshall and Olkin:1997}, the Nadarajah-Haghighi exponential distribution \cite{Nadarajah and Haghighi}, the exponentiated exponential distribution   \cite{gupta2}, the transmuted Weibull distribution  \cite{Aryal and Tsokos:2011}, the transmuted generalized exponential distribution \cite{Khan_King_Hudson}, the transmuted linear exponential distribution \cite{Ref tle} and the Kappa distribution  \cite{Mielke:1973}. The best performance of the proposed distribution recommends it as a hydrologic probability model, such as the most known distributions: Kappa and gamma distributions. This motivates to estimate important hydrologic parameters of those data sets by making use of the distribution.  

The rest of this article is organized as follows. In Section \ref{new}, we present our main distribution. Some of its mathematical properties are studied in Section \ref{struc}. Residual life functions are determined in Section \ref{resi}. Estimations of the parameters are investigated in Section \ref{estim}. Applications to two real-life data sets are provided in Section \ref{appli}. Concluding remarks are addressed in Section \ref{conclusion}.

\section{A new weighted transmuted exponential distribution}\label{new}

In this section, we precise what is the considered cdf $G(x)$ given by \eqref{f2}. \cite{shaw} and \cite{Aryal and Tsokos:2011} introduced the quadratic rank transmutation map (QRTM) to propose a new distribution based on the Weibull/exponential one with great flexibility and nice fit for real-life data. In the current studies, it remains a serious competitor in terms of precision in modelling (see \cite{ow}). 
%
%Using the quadratic rank transmutation map (QRTM) introduced by \cite{shaw} and \cite{Aryal and Tsokos:2011} has proposed a new distribution based on the Weibull/exponential one showing a great flexibility and nice fit for reallife data.
For these reasons, we use it in our study. We consider the cdf: $$G(x)=(\theta+1)H(x)-\theta \left[H(x)\right]^2, \quad  \theta \in [-1,1],$$ where $H(x)$ is considered to be the cdf of the exponential distribution of parameter $\lambda$:
$$G(x)= (\theta+1)(1-\mathrm{e}^{-\lambda  x})-  \theta(1-\mathrm{e}^{-\lambda  x})^2, \quad x, \lambda>0.$$
Set the above expression into \eqref{f2}, we introduce a new cdf defined by
\begin{align*}
F(x)&=\frac{\left[ 1+ \gamma  (\theta+1) (1-\mathrm{e}^{-\lambda   x})-\gamma  \theta (1-\mathrm{e}^{-\lambda  x})^2\right] ^{\frac{1}{\gamma }+1}-1%
}{\left( 1+ \gamma \right) ^{\frac{1}{\gamma }+1}-1}\\
& = \frac{\left[ 1+ \gamma -  \gamma  \mathrm{e}^{-\lambda  x}(1 - \theta + \theta   \mathrm{e}^{-\lambda  x})\right] ^{\frac{1}{\gamma }+1}-1%
}{\left( 1+ \gamma \right) ^{\frac{1}{\gamma }+1}-1}, \quad x>0,  \lambda,   \gamma >0.
\end{align*}
Another useful expression is the following one:
\begin{eqnarray}\label{cdf0}
F(x)=\frac{\left( 1+ \gamma \right) ^{\frac{1}{\gamma }+1}\left[ 1 -  \frac{\gamma}{1+\gamma}  \mathrm{e}^{-\lambda  x}(1 - \theta + \theta \mathrm{e}^{-\lambda  x})\right]^{\frac{1}{\gamma} + 1 } - 1 }{\left( 1 + \gamma \right) ^{\frac{1}{\gamma} + 1 } - 1 }.
\end{eqnarray}
We will refer to the distribution given by \eqref{cdf0} as the new weighted transmuted exponential and denote it by NWTE($\lambda$, $\gamma$, $\theta$) with the considered parameters.

The corresponding pdf is given by
\begin{eqnarray}\label{pdf0}
f(x)=\frac{\lambda  \left( 1+\gamma\right)^{\frac{1}{\gamma }+1} }{\left( 1 + \gamma \right)
^{\frac{1}{\gamma }+1}-1 }\left[  1 -  \frac{\gamma}{1+\gamma} \mathrm{e}^{-\lambda   x}\left(1 - \theta + \theta   \mathrm{e}^{-\lambda  x}\right)\right] ^{\frac{1}{\gamma }}\mathrm{e}^{-\lambda  x} \left( 1 - \theta + 2  \theta  \mathrm{e}^{-\lambda  x}\right).
\end{eqnarray}

The associated hrf is given by
\begin{eqnarray}\label{hrf0}
h(x)= \frac{\lambda \left[ 1 -  \frac{\gamma}{1+\gamma} \mathrm{e}^{-\lambda  x}(1-\theta+\theta \mathrm{e}^{-\lambda  x})\right] ^{
\frac{1}{\gamma }}\mathrm{e}^{-\lambda  x} \left( 1-\theta+2 \theta  \mathrm{e}^{-\lambda  x}\right)}{1-\left[ 1-  \frac{\gamma}{1+\gamma} \mathrm{e}^{-\lambda  x}\left(1-\theta+\theta  \mathrm{e}^{-\lambda  x}\right)\right] ^{\frac{1}{\gamma}+1}}.
\end{eqnarray}

Let us now discuss the possible shapes of pdf \eqref{pdf0} and hrf \eqref{hrf0} as follows.
$$\lim_{x\rightarrow 0}f(x)=\frac{\lambda (1+\theta)  \left( 1+\gamma\right) }{\left( 1+ \gamma \right)^{\frac{1}{\gamma }+1}-1 }, \quad \lim_{x\rightarrow +\infty}f(x)=0.$$
On the other side, we have
$$\lim_{x\rightarrow 0}h(x)=\frac{\lambda (1+\theta)  \left( 1+\gamma\right) }{\left( 1+ \gamma \right)^{\frac{1}{\gamma }+1}-1 }, \quad \lim_{x\rightarrow +\infty}h(x)=\lambda.$$
In order to visualize the wide variety of shapes, some plots of the pdf \eqref{pdf0} and hrf \eqref{hrf0} are given in Figures \ref{pdf plots} and \ref{hrf plots}. We see that $\gamma$ has a great impact on the mode of the NWTE distribution. Moreover, the hrf also exhibits sudden spikes at the end of upside-down bathtub shapes, which manages the model to analyze a non-stationary real-life data.

\begin{figure}[h]
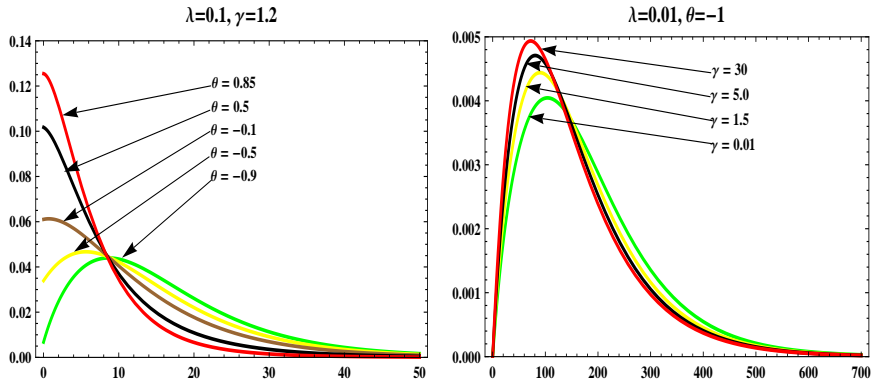

\centering
\includegraphics[width=5.5cm,height=5cm]{pdf_1.eps}
\hspace{0.15cm}
\includegraphics[width=5.5cm,height=5cm]{pdf_2.eps}
\caption{Plots of the NWTE pdf.}\label{pdf plots}
\end{figure}

\begin{figure}[h]
\centering
\includegraphics[width=7cm,height=5cm]{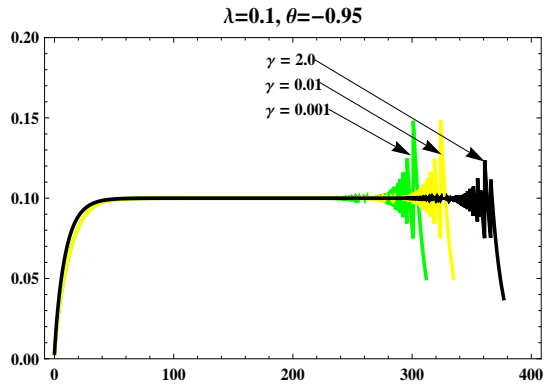}
%\hspace{0.15cm}
%\includegraphics[width=5.5cm,height=5cm]{pdf_2.eps}
\caption{Plots of the NWTE hrf.}\label{hrf plots}
\end{figure}

\section{Structural properties of the NWTE distribution}\label{struc}

\subsection{Expansion for the associated functions}

{\it {\textbf{Expansion for the cdf function.}}} First of all, set $h(u)=u(1-\theta+\theta u)$, $u\in (0,1)$, $\theta \in [-1,1]$. Note that we have $h'(u)=1-\theta+2\theta u\ge \min(1-\theta,1+\theta)\ge 0$, so $h$ is increasing.  Since $h(0)=0$ and $h(1)=1$, we have $0< h(u)< 1$ for all $u\in (0,1)$.
Since $0\le \frac{\gamma}{1+\gamma}<1$ and $0<\mathrm{e}^{-\lambda  x}(1-\theta+\theta  \mathrm{e}^{-\lambda  x})= h(\mathrm{e}^{-\lambda  x})<1$, the generalized binomial expansion, we have
\begin{eqnarray}\label{eq1}
\lefteqn{\left[ 1-  \frac{\gamma}{1+\gamma}  \mathrm{e}^{-\lambda  x}(1-\theta+\theta  \mathrm{e}^{-\lambda  x}) \right] ^{\frac{1}{\gamma }+1}=\sum_{i=0}^{+\infty} \binom{1/\gamma+1}{i} \left(-\frac{\gamma}{1+\gamma}\right)^i \mathrm{e}^{-\lambda  i  x}(1-\theta+\theta  \mathrm{e}^{-\lambda  x})^i} & & \nonumber \\
& =
&  \sum_{i=0}^{+\infty}\sum_{k=0}^i\binom{1/\gamma+1}{i}\binom{i}{k} \left(-\frac{\gamma}{1+\gamma}\right)^i \mathrm{e}^{-\lambda i x}(1-\theta)^{i-k}\theta^k \mathrm{e}^{-\lambda k x}=  \sum_{i=0}^{+\infty}\sum_{k=0}^i H_{i,k} \mathrm{e}^{-\lambda  x  (i+k)},
\end{eqnarray}
where
$$H_{i,k}=\binom{1/\gamma+1}{i}\binom{i}{k} \left(\frac{\gamma \theta-\gamma}{1+\gamma}\right)^i  \left( \frac{\theta}{1-\theta}\right)^k.$$
Therefore we can expand the cdf function as
\begin{eqnarray}\label{cdf1}
F(x)=\frac{1
}{\left( 1+ \gamma \right) ^{\frac{1}{\gamma }+1}-1} \left[ (1+\gamma)^{\frac{1}{\gamma }+1} \sum_{i=0}^{+\infty}\sum_{k=0}^i H_{i,k} \mathrm{e}^{-\lambda  x   (i+k)}-1\right].
\end{eqnarray}

{\it  {\textbf{Expansion for the pdf function.}}} Similar mathematical arguments used for \eqref{eq1} give
\begin{eqnarray*}
\left[ 1-  \frac{\gamma}{1+\gamma} \mathrm{e}^{-\lambda  x}(1-\theta+\theta \mathrm{e}^{-\lambda  x}) \right] ^{\frac{1}{\gamma }}=  \sum_{i=0}^{+\infty}\sum_{k=0}^i A_{i,k}   \mathrm{e}^{-\lambda  x  (i+k)},
\end{eqnarray*}
where
\begin{eqnarray*}
A_{i,k}=\binom{1/\gamma}{i}\binom{i}{k} \left(\frac{\gamma  \theta-\gamma}{1+\gamma}\right)^i  \left( \frac{\theta}{1-\theta}\right)^k.
\end{eqnarray*}
Therefore
\begin{eqnarray}\label{e1}
f(x)&=&\frac{\lambda  \left( 1+\gamma\right)^{\frac{1}{\gamma }+1} }{\left( 1+ \gamma \right)
^{\frac{1}{\gamma }+1}-1 }  \sum_{i=0}^{+\infty}\sum_{k=0}^i A_{i,k}   \mathrm{e}^{-\lambda  x  (i+k)}\mathrm{e}^{-\lambda  x} \left( 1-\theta+2 \theta  \mathrm{e}^{-\lambda  x}\right) \nonumber\\
& =
&  \sum_{i=0}^{+\infty}\sum_{k=0}^i B_{i,k}  \left[ (1-\theta) \mathrm{e}^{-\lambda  x (i+k+1)}+2 \theta  \mathrm{e}^{-\lambda  x  (i+k+2)}\right],
\end{eqnarray}
where
$$B_{i,k}=\frac{\lambda  \left( 1+\gamma\right)^{\frac{1}{\gamma }+1} }{\left( 1+ \gamma \right)
^{\frac{1}{\gamma }+1}-1 }A_{i,k}.$$

{\it  {\textbf{On the survival function.}}} Note that
\begin{eqnarray}\label{surv0}
S(x)=1-F(x)=\frac{\left( 1+ \gamma \right) ^{\frac{1}{\gamma }+1}\left[1-\left\{ 1-  \frac{\gamma}{1+\gamma} \mathrm{e}^{-\lambda  x}(1-\theta+\theta \mathrm{e}^{-\lambda  x})\right\} ^{\frac{1}{\gamma}+1}\right]}{\left( 1+ \gamma \right) ^{\frac{1}{\gamma }+1}-1}.
\end{eqnarray}
Using \eqref{cdf1}, we have the following expansion
\begin{eqnarray}\label{sur1}
S(x)=\frac{(1+\gamma)^{\frac{1}{\gamma }+1}
}{\left( 1+ \gamma \right) ^{\frac{1}{\gamma }+1}-1} \left[1- \sum_{i=0}^{+\infty}\sum_{k=0}^i H_{i,k} \mathrm{e}^{-\lambda  x  (i+k)}\right].
\end{eqnarray}

{\it  {\textbf{Expansion for the hrf function.}}} Using \eqref{hrf0}, \eqref{e1} and \eqref{sur1}, an expansion of the hrf function is given by
\begin{eqnarray}\label{hrf1}
h(x)=\frac{\lambda\sum_{i=0}^{+\infty}\sum_{k=0}^i A_{i,k}  \left[ (1-\theta)  \mathrm{e}^{-\lambda  x  (i+k+1)}+2 \theta  \mathrm{e}^{-\lambda x (i+k+2)}\right]}{1- \sum_{i=0}^{+\infty}\sum_{k=0}^i   H_{i,k}\mathrm{e}^{-\lambda x (i+k)}}.
\end{eqnarray}
Another expansion comes from the geometric series decomposition:
\begin{eqnarray*}
\frac{1}{1-\left[ 1-  \frac{\gamma}{1+\gamma} \mathrm{e}^{-\lambda  x}(1-\theta+\theta \mathrm{e}^{-\lambda  x})\right] ^{\frac{1}{\gamma}+1}}=\sum_{m=0}^{+\infty}\left[ 1-  \frac{\gamma}{1+\gamma}  \mathrm{e}^{-\lambda  x}(1-\theta+\theta  \mathrm{e}^{-\lambda  x})\right] ^{m\left(\frac{1}{\gamma}+1\right)}.
\end{eqnarray*}
By \eqref{hrf0} and similar mathematical arguments used for \eqref{eq1} give:
\begin{align*}
h(x)&=\lambda \sum_{m=0}^{+\infty}\left[ 1-  \frac{\gamma}{1+\gamma}  \mathrm{e}^{-\lambda  x}(1-\theta+\theta  \mathrm{e}^{-\lambda  x})\right] ^{m\left(\frac{1}{\gamma}+1\right)+\frac{1}{\gamma}} \left[(1-\theta) \mathrm{e}^{-\lambda  x}+2 \theta  \mathrm{e}^{-2\lambda x}\right]\\
& = \sum_{m=0}^{+\infty}\sum_{i=0}^{+\infty}\sum_{k=0}^i G_{i,k,m}  \left[(1-\theta)  \mathrm{e}^{-\lambda  x  (i+k+1)}+2 \theta  \mathrm{e}^{-\lambda  x  (i+k+2)}\right],
\end{align*}
where
$$G_{i,k,m}=\lambda \binom{m\left(\frac{1}{\gamma}+1\right)+\frac{1}{\gamma}}{i}\binom{i}{k} \left(\frac{\gamma \theta-\gamma}{1+\gamma}\right)^i  \left( \frac{\theta}{1-\theta}\right)^k.$$

\subsection{Quantile function}
The quantile functions are in widespread use in general statistics to obtain mathematical properties of a distribution and often find re\-pre\-sentations in terms of lookup tables for key percentiles. For generating data from the NWTE model, let $u\sim \it{U}(0,1)$. Then, by inverting the cdf \eqref{cdf0} and after some algebra, we get the quantile function
\begin{equation} \label{quantile}
Q(u)=\frac{1}{\lambda}\left\lbrack -\log\left\lbrace 1-\left(\frac{1+\theta-\sqrt{(1+\theta)^2-4\frac{\theta}{\gamma}\left\lbrack\left\{ u\left( \left[ 1+\gamma \right]^{\frac{1}{\gamma}+1}-1 \right) +1\right\}^{\frac{\gamma}{\gamma+1}}-1\right\rbrack }}{2  \theta}\right)\right\rbrace\right\rbrack.
\end{equation}

The analysis of the variability of the skewness and kurtosis of X can be investigated based on quantile measures. The Bowley skewness is given by
\begin{align*}
S=\frac{Q(3/4)-2 Q(1/2)+Q(1/4)}{Q(3/4)-Q(1/4)}
\end{align*}
and the Moors' kurtosis by
\begin{align*}
K = \frac{\{Q(7/8)- Q(5/8)\}+ \{Q(3/8)-Q(1/8)\}}{Q(6/8)-Q(2/8)},
\end{align*}
where $Q(u)$ is given by \eqref{quantile}.

These measures are less sensitive to outliers and they exist even for distributions without moments. Figure \ref{skewness and kurtosis} displays plots of S and K as functions of $\theta$ and $\gamma$, which show their variability in terms of the shape parameters.

\begin{figure}[h]
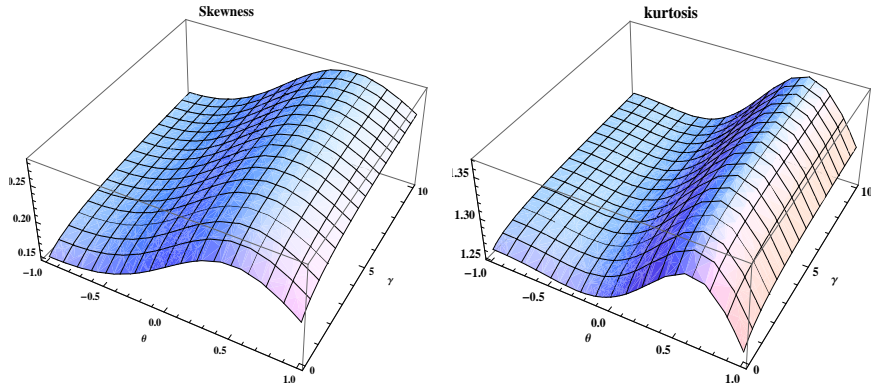

\centering
\includegraphics[width=5.5cm,height=5cm]{Skewness.eps}
\hspace{0.15cm}
\includegraphics[width=5.5cm,height=5cm]{Kurtosis.eps}
\caption{Plots of the skewness and kurtosis of the NWTE distribution for $\lambda=0.5$.}\label{skewness and kurtosis}
\end{figure}

\subsection{Moments and moment generating function}

{\it {\textbf{Moments.}}} Using equation \eqref{e1} and the gamma function $\gamma(\nu)=\int_{0}^{+\infty}x^{\nu-1}\mathrm{e}^{-x}\mathrm{d}x$, the $r$-th moments about the origin is given by
\begin{align}\label{mom}
\mu_r&=E(X^r)=\int_{-\infty}^{+\infty}x^rf(x)\mathrm{d}x\nonumber \\
%& =
%&  \int_{0}^{+\infty}x^r  \sum_{i=0}^{+\infty}\sum_{k=0}^i B_{i,k}  \left( (1-\theta) \mathrm{e}^{-\lambda x (i+k+1)}+2\theta \mathrm{e}^{-\lambda x (i+k+2)}\right)\mathrm{d}x\nonumber \\
& =\sum_{i=0}^{+\infty}\sum_{k=0}^i B_{i,k}   \left[ (1-\theta)  \int_{0}^{+\infty}x^r  \mathrm{e}^{-\lambda  x  (i+k+1)}\mathrm{d}x + 2 \theta   \int_{0}^{+\infty}x^r \mathrm{e}^{-\lambda  x  (i+k+2)}\mathrm{d}x\right]\nonumber \\
& = \frac{\gamma(r+1)}{\lambda^{r+1}}  \sum_{i=0}^{+\infty}\sum_{k=0}^i B_{i,k}   \left[ \frac{1-\theta}{(i+k+1)^{r+1}} + \frac{2 \theta}{(i+k+2)^{r+1}} \right].
\end{align}

{\it {\textbf{The moment generating function.}}} Similarly the moment generating function associated to the NWTE distribution is given by, for $t\le \lambda$,
\begin{align}\label{mom1}
M_X(t)&=E(\mathrm{e}^{tx})=\int_{-\infty}^{+\infty}\mathrm{e}^{t x} f(x)\mathrm{d}x\nonumber\\
& = \sum_{i=0}^{+\infty}\sum_{k=0}^i B_{i,k}   \left[ (1-\theta)   \int_{0}^{+\infty}\mathrm{e}^{t x}\mathrm{e}^{-\lambda  x  (i+k+1)}\mathrm{d}x+2 \theta  \int_{0}^{+\infty}\mathrm{e}^{t x}\mathrm{e}^{-\lambda  x  (i+k+2)}\mathrm{d}x\right]\nonumber\\
& = \sum_{i=0}^{+\infty}\sum_{k=0}^i B_{i,k}   \left[ \frac{1-\theta}{\lambda(i+k+1)-t} + \frac{2 \theta}{\lambda(i+k+2)-t} \right].
\end{align}

\subsection{Entropies}

An entropy can be considered as a measure of uncertainty of probability distribution of a random variable. Therefore, we obtain three entropies for the NWTE distribution with investigating a numerical study among them.

{\it {\textbf{Entropy 1.}}} Let us consider the Shannon entropy \cite{shannon}: $H(f)=-E\left[\log[f(X)] \right]=-\int_{-\infty}^{+\infty}f(x)\log[f(x)]\mathrm{d}x$. One can observe that
\begin{align}\label{entropy}
H(f) &= -  \log\left[\frac{\lambda  \left( 1+\gamma\right)^{\frac{1}{\gamma }+1} }{\left( 1+ \gamma \right)
^{\frac{1}{\gamma }+1}-1} \right]-\frac{1}{\gamma}\int_{0}^{+\infty}f(x) \log\left[ 1 -  \frac{\gamma}{1+\gamma} \mathrm{e}^{-\lambda  x}(1-\theta+\theta  \mathrm{e}^{-\lambda  x})\right]\mathrm{d}x\nonumber\\
& + \lambda  E(X)-\int_{0}^{+\infty}f(x) \log\left[ 1-\theta+2 \theta  \mathrm{e}^{-\lambda  x}\right]\mathrm{d}x.
\end{align}
Let us now expand the two integrals by using the logarithmic expansion: $\log(1-u)=-\sum_{m=1}^{+\infty}\frac{u^m}{m}$, $|u|<1$. Since $|\frac{\gamma}{1+\gamma} \mathrm{e}^{-\lambda  x}(1-\theta+\theta \mathrm{e}^{-\lambda  x})|<1$, we have
\begin{eqnarray*}
\lefteqn{\int_{0}^{+\infty}f(x) \log\left[ 1 -  \frac{\gamma}{1+\gamma} \mathrm{e}^{-\lambda  x}(1-\theta+\theta \mathrm{e}^{-\lambda  x})\right]\mathrm{d}x}& & \\
& =
&  -\sum_{m=1}^{+\infty}\frac{1}{m}\left(\frac{\gamma}{1+\gamma}\right)^m \int_{0}^{+\infty} f(x)  \mathrm{e}^{-\lambda m x}(1-\theta+\theta  \mathrm{e}^{-\lambda  x})^mdx\\
& =
& -\sum_{m=1}^{+\infty}\sum_{\ell=0}^m\frac{1}{m}\binom{m}{\ell}(1-\theta)^{m-\ell}\left( \frac{\gamma}{1+\gamma}\right)^m\theta^{\ell}\int_{0}^{+\infty}f(x) \mathrm{e}^{-\lambda (\ell+m) x}\mathrm{d}x= \sum_{m=1}^{+\infty}\sum_{\ell=0}^mR_{m,\ell},
\end{eqnarray*}
where
$$R_{m,\ell}=-\frac{1}{m}\binom{m}{\ell}(1-\theta)^{m-\ell}\left( \frac{\gamma}{1+\gamma}\right)^m\theta^{\ell}M_X[-\lambda (\ell+m)],$$
$M_X(t)$ denotes the moment generating function defined by \eqref{mom1}.

For the second integral in \eqref{entropy}, since  $|\theta(1-2\mathrm{e}^{-\lambda  x})|<1$,  we have

\begin{eqnarray*}
\lefteqn{\int_{0}^{+\infty}f(x) \log\left[ 1-\theta+2\theta \mathrm{e}^{-\lambda  x}\right]dx=  -\sum_{m=1}^{+\infty}\frac{\theta^m}{m}\int_{0}^{+\infty}f(x)(1-2 \mathrm{e}^{-\lambda  x})^mdx} & & \\
& =
& -\sum_{m=1}^{+\infty}\sum_{\ell=0}^m\frac{1}{m}\binom{m}{\ell}\theta^m (-2)^{\ell}\int_{0}^{+\infty}f(x) \mathrm{e}^{-\lambda  \ell  x}\mathrm{d}x= \sum_{m=1}^{+\infty}\sum_{\ell=0}^m U_{m,\ell},
\end{eqnarray*}
where
$$U_{m,\ell}=-\frac{1}{m}\binom{m}{\ell}\theta^m(-2)^{\ell}M_X(-\lambda  \ell).$$

{\it {\textbf{Entropy 2.}}} Let us now focus our attention on the  R\'enyi entropy \cite{renyi}: $J_{R}(\beta)=\frac{1}{1-\beta}\log \left( \int_{-\infty}^{+\infty}[f(x)]^{\beta}\mathrm{d}x\right)$, with $\beta\not = 1$ and $\beta>0$. Similar mathematical arguments used for \eqref{eq1} give :
\begin{eqnarray*}
\left[ 1-  \frac{\gamma}{1+\gamma} \mathrm{e}^{-\lambda  x}(1-\theta+\theta \mathrm{e}^{-\lambda  x}) \right] ^{\frac{\beta}{\gamma }}=  \sum_{i=0}^{+\infty}\sum_{k=0}^i C_{i,k}  \mathrm{e}^{-\lambda  x  (i+k)},
\end{eqnarray*}
where
\begin{eqnarray*}
C_{i,k}=\binom{\beta/\gamma}{i}\binom{i}{k} \left(\frac{\gamma  \theta-\gamma }{1+\gamma}\right)^i  \left( \frac{\theta}{1-\theta}\right)^k.
\end{eqnarray*}
On the other side, observing that $|\theta(1-2\mathrm{e}^{-\lambda  x})|<1$, similar mathematical arguments used for \eqref{eq1} give :
\begin{eqnarray*}
\left( 1-\theta+2 \theta  \mathrm{e}^{-\lambda  x} \right) ^{\beta}= \left[ 1-\theta(1-2 \mathrm{e}^{-\lambda  x}) \right] ^{\beta}=  \sum_{j=0}^{+\infty}\sum_{\ell=0}^j D_{j,\ell}   \mathrm{e}^{-\lambda  \ell  x},
\end{eqnarray*}
where
\begin{eqnarray*}
D_{j,\ell}=\binom{\beta}{j}\binom{j}{\ell} (-\theta)^j (-2)^{\ell}.
\end{eqnarray*}
Hence $[f(x)]^{\beta}$ can be expanded as
\begin{eqnarray*}
[f(x)]^{\beta}=  \sum_{i=0}^{+\infty}\sum_{k=0}^i  \sum_{j=0}^{+\infty}\sum_{\ell=0}^j F_{i,k,j,\ell} \mathrm{e}^{-\lambda  x  (i+k+\ell+\beta)},
\end{eqnarray*}
where
$$F_{i,k,j,\ell}=\left[\frac{\lambda  \left( 1+\gamma\right) }{\left( 1+ \gamma \right)
^{\frac{1}{\gamma }+1}-1 }\right]^{\beta} C_{i,k} D_{j,\ell}.$$
Hence
\begin{eqnarray*}
\int_{-\infty}^{+\infty}[f(x)]^{\beta}\mathrm{d}x= \int_{0}^{+\infty} \sum_{i=0}^{+\infty}\sum_{k=0}^i  \sum_{j=0}^{+\infty}\sum_{\ell=0}^j F_{i,k,j,\ell} \mathrm{e}^{-\lambda  x  (i+k+\ell+\beta)}\mathrm{d}x= \frac{1}{\lambda} \sum_{i=0}^{+\infty}\sum_{k=0}^i  \sum_{j=0}^{+\infty}\sum_{\ell=0}^j F_{i,k,j,\ell} \frac{1}{i+k+\ell+\beta}.
\end{eqnarray*}
Therefore
\begin{eqnarray*}
J_{R}(\beta)=\frac{1}{1-\beta }\log \left( \int_{-\infty}^{+\infty}[f(x)]^{\beta}\mathrm{d}x\right)=\frac{1}{1-\beta} \left[- \log(\lambda) + \log\left( \sum_{i=0}^{+\infty}\sum_{k=0}^i  \sum_{j=0}^{+\infty}\sum_{\ell=0}^j F_{i,k,j,\ell}\frac{1}{i+k+\ell+\beta}\right)\right].
\end{eqnarray*}

{\it{\textbf{ Entropy 3.}}} We now focus our attention on the entropy introduced by \cite{Mathai and Haubold:2008}: $J_{MH}(\delta )=\frac{1}{\delta-1}\left(\int_{-\infty}^{+\infty}\left[ f(x)\right]^{2-\delta }\mathrm{d}x-1\right)$, with $\delta\not = 1$ and $\delta>0$. Proceeding as for $J_{R}(\beta)$ with $2-\delta$ instead of $\beta$, we obtain
\begin{eqnarray*}
[f(x)]^{2-\delta}=  \sum_{i=0}^{+\infty}\sum_{k=0}^i  \sum_{j=0}^{+\infty}\sum_{\ell=0}^j G_{i,k,j,\ell} \mathrm{e}^{-\lambda  x  (i+k+\ell+2-\delta)},
\end{eqnarray*}
where
$$G_{i,k,j,\ell}=\left[\frac{\lambda  \left( 1+\gamma\right) }{\left( 1+ \gamma \right)
^{\frac{1}{\gamma }+1}-1 }\right]^{2-\delta}\binom{(2-\delta)/\gamma}{i}\binom{i}{k} \left(\frac{\gamma  \theta-\gamma }{1+\gamma}\right)^i  \left( \frac{\theta}{1-\theta}\right)^k\binom{2-\delta}{j}\binom{j}{\ell} (-\theta)^j (-2)^{\ell}.$$
Hence
\begin{eqnarray*}
J_{MH}(\delta )&=&\frac{1}{\delta-1}\left(\int_{-\infty}^{+\infty}\left[ f(x)\right]^{2-\delta }\mathrm{d}x-1\right)\\
& =
& \frac{1}{\delta-1}\left(\frac{1}{\lambda}\sum_{i=0}^{+\infty}\sum_{k=0}^i  \sum_{j=0}^{+\infty}\sum_{\ell=0}^j G_{i,k,j,\ell}\frac{1}{i+k+\ell+2-\delta}-1\right).
\end{eqnarray*}

Some numerical values for the three entropies are given in Table \ref{Entropies}. It can be observed that these entropies decrease with increasing the parameter values. Moreover, one can see that $J_{MH}(\delta)$ has the smallest values comparing with the other entropies considered here.

\begin{table}
\centering
\caption{Entropy for several arbitrary parameter values with $\lambda=1$.} \label{Entropies}
\scriptsize{\begin{tabular}{llllllllll}
\noalign{\smallskip}\hline\noalign{\smallskip}
$ \gamma \downarrow$ \ \  $\theta=0.5$ & $H(f)$& $J_R(0.5)$ & $J_{MH}(0.5)$\\
\hline
0.1&   0.94207  &  1.32902  &  0.63437 \\
0.4&   0.91244  &  1.30828  &  0.61079 \\
0.8&   0.88415  &  1.28871  &  0.58774 \\
1.2&   0.86349  &  1.27456  &  0.57056 \\
1.5&   0.85122  &  1.26622  &  0.56021 \\
1.8&   0.84092  &  1.25926  &  0.55144 \\
2.0&   0.83492  &  1.25522  &  0.54629 \\ \hline
$ \theta \downarrow$ \ \ $\gamma=0.7$& \\
\hline
-0.9&  1.40499  &  1.66797  &  0.94970 \\
-0.5&  1.33563  &  1.61265  &  0.91027 \\
-0.2&  1.24327  &  1.54839  &  0.84921 \\
0.1&   1.11731  &  1.46168  &  0.75998 \\
0.4&   0.95418  &  1.34297  &  0.64014 \\
0.6&   0.82110  &  1.23549  &  0.54159 \\
0.8&   0.66351  &  1.08602  &  0.42654 \\ \hline
 \noalign{\smallskip}
\end{tabular}}
\end{table}

\subsection{Conditional moments and mean deviations}
Here, we introduce an important lemma which will be used in the next sections.
\begin{lemma}\label{lem1} Let $J_r(t)=\int_{0}^{t}x^rf(x)\mathrm{d}x$ and $\gamma(t,\nu)=\int_{0}^{t}x^{\nu-1}\mathrm{e}^{-x}\mathrm{d}x$ be the lower incomplete gamma function. Then we have
\begin{eqnarray}\label{Jr}
J_r(t)= \frac{1}{\lambda^{r+1}} \sum_{i=0}^{+\infty}\sum_{k=0}^i  B_{i,k}  \left[ (1-\theta)\frac{\gamma\{\lambda(i+k+1)t,r+1\}}{(i+k+1)^{r+1}} + 2 \theta \frac{\gamma\{\lambda(i+k+2)t,r+1\}}{(i+k+2)^{r+1}} \right].
\end{eqnarray}
\end{lemma}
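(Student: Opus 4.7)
The plan is to insert the series representation \eqref{e1} of the density $f$ into the integral defining $J_{r}(t)$, interchange the order of summation and integration, and reduce each remaining integral to a lower incomplete gamma function via a linear substitution.

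First I recall that \eqref{e1} gives, on $(0,+\infty)$,
\begin{equation*}
f(x)=\sum_{i=0}^{+\infty}\sum_{k=0}^{i} B_{i,k}\left[(1-\theta)\mathrm{e}^{-\lambda(i+k+1)x}+2\theta \mathrm{e}^{-\lambda(i+k+2)x}\right].
\end{equation*}
Multiplying by $x^{r}$, integrating over $(0,t)$, and swapping summation with integration yields
\begin{equation*}
J_{r}(t)=\sum_{i=0}^{+\infty}\sum_{k=0}^{i} B_{i,k}\left[(1-\theta)\int_{0}^{t}x^{r}\mathrm{e}^{-\lambda(i+k+1)x}\mathrm{d}x+2\theta\int_{0}^{t}x^{r}\mathrm{e}^{-\lambda(i+k+2)x}\mathrm{d}x\right].
\end{equation*}

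Next, for any $a>0$ the change of variable $u=ax$ gives the identity
\begin{equation*}
\int_{0}^{t}x^{r}\mathrm{e}^{-ax}\mathrm{d}x=\frac{1}{a^{r+1}}\int_{0}^{at}u^{r}\mathrm{e}^{-u}\mathrm{d}u=\frac{\gamma(at,r+1)}{a^{r+1}},
\end{equation*}
which is simply the definition of the lower incomplete gamma function introduced in the statement. Applying this with $a=\lambda(i+k+1)$ and $a=\lambda(i+k+2)$, and pulling the common factor $\lambda^{-(r+1)}$ outside the double sum, reproduces formula \eqref{Jr} verbatim.

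The only step that demands genuine care is the interchange of sum and integral, since the coefficients $B_{i,k}$ inherit sign changes from $\binom{1/\gamma}{i}$ and from $((\gamma\theta-\gamma)/(1+\gamma))^{i}$, so Tonelli's theorem does not apply directly to the signed expansion. I would justify the swap by dominated convergence: the expansion \eqref{eq1} from which \eqref{e1} is derived comes from the generalized binomial theorem applied to the quantity $[1-\frac{\gamma}{1+\gamma}\mathrm{e}^{-\lambda x}(1-\theta+\theta \mathrm{e}^{-\lambda x})]^{1/\gamma}$, whose argument stays strictly inside $(0,1)$ on $(0,t)$, so the partial sums converge uniformly on $[0,t]$; combined with the trivial bound $x^{r}\le t^{r}$, this secures the termwise integration and completes the proof.
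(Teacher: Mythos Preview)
Your proof is correct and follows essentially the same route as the paper: insert the expansion \eqref{e1} into $J_r(t)$, integrate term by term, and recognise each integral as a lower incomplete gamma via the substitution $u=ax$. The only difference is that you supply an explicit justification (uniform convergence on $[0,t]$) for the interchange of summation and integration, which the paper simply takes for granted.
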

\begin{proof} Using the equation \eqref{e1}, we have
\begin{align*}
J_r(t)&=\int_{0}^{t}x^rf(x) \mathrm{d}x= \sum_{i=0}^{+\infty}\sum_{k=0}^i B_{i,k}  \left[ (1-\theta)  \int_{0}^{t}x^r\mathrm{e}^{-\lambda x (i+k+1)}\mathrm{d}x+2\theta  \int_{0}^{t}x^re^{-\lambda x (i+k+2)}\mathrm{d}x\right]\\
& =\frac{1}{\lambda^{r+1}}  \sum_{i=0}^{+\infty}\sum_{k=0}^i   B_{i,k}  \left[ (1-\theta)\frac{\gamma\{\lambda(i+k+1)t,r+1\}}{(i+k+1)^{r+1}} + 2 \theta\frac{\gamma\{\lambda(i+k+2)t,r+1\}}{(i+k+2)^{r+1}} \right].
\end{align*}
\end{proof}
%
%{\it{\textbf{Conditional and reversed moments.}}}
The $r$-th conditional moments of the NWTE distribution is given by
\begin{eqnarray}\label{cm}
E(X^r \mid X>t)=\frac{1}{1-F(t)}\int_{t}^{+\infty}x^r f(x) \mathrm{d}x=\frac{1}{S(t)}\left[E(X^r)-J_r(t)\right].
\end{eqnarray}
It can be expressed using \eqref{hrf0}, \eqref{mom} and Lemma \ref{lem1}. The same remark holds for the $r$-th reversed moments of the NWTE distribution  given by
\begin{eqnarray*}
E(X^r \mid X\le t)=\frac{1}{F(t)}\int_{0}^{t}x^rf(x)\mathrm{d}x=\frac{1}{F(t)}J_r(t).
\end{eqnarray*}
The mean deviations of $X$ about the mean $\mu=E(X)$ can be expressed as $\delta=2\mu F(\mu)-2J_1(\mu)$ and the mean deviations of $X$ about the median $M$ has the form $\eta=\mu-2 J_1(M)$.

\section{(Reversed) Residual life functions}\label{resi}

\subsection{Residual lifetime function}

The residual life is described by the conditional random variable $R_{(t)}=X-t \mid X>t$, $t\ge 0$. Using \eqref{sur1}, the survival function of the residual lifetime $R_{(t)}$ for the NWTE distribution is given by
$$S_{R_{(t)}}(x)=\frac{S(x+t)}{S(t)}=\frac{1-\left[ 1-  \frac{\gamma}{1+\gamma} \mathrm{e}^{-\lambda (x+t)}\left\{1-\theta+\theta  \mathrm{e}^{-\lambda (x+t)}\right\}\right] ^{\frac{1}{\gamma}+1}}{1-\left[ 1-  \frac{\gamma}{1+\gamma} \mathrm{e}^{-\lambda t}(1-\theta+\theta  \mathrm{e}^{-\lambda  t})\right] ^{\frac{1}{\gamma}+1}}, \quad x\ge 0.$$
The associated cdf is given by
$$F_{R_{(t)}}(x)=\frac{\left[1-  \frac{\gamma}{1+\gamma} \mathrm{e}^{-\lambda (x+t)}\left\{1-\theta+\theta  \mathrm{e}^{-\lambda (x+t)}\right\}\right] ^{\frac{1}{\gamma}+1}-\left[ 1-  \frac{\gamma}{1+\gamma} \mathrm{e}^{-\lambda t}(1-\theta+\theta \mathrm{e}^{-\lambda t})\right] ^{\frac{1}{\gamma}+1}}{1-\left[1-  \frac{\gamma}{1+\gamma} \mathrm{e}^{-\lambda t}(1-\theta+\theta \mathrm{e}^{-\lambda t})\right] ^{\frac{1}{\gamma}+1}}.$$
The corresponding pdf is given by
\begin{eqnarray*}
f_{R_{(t)}}(x) = \frac{\lambda\left[ 1 -  \frac{\gamma}{1+\gamma} \mathrm{e}^{-\lambda (x+t)}\left\{1-\theta+\theta  \mathrm{e}^{-\lambda (x+t)}\right\}\right] ^{
\frac{1}{\gamma }}\mathrm{e}^{-\lambda  (x+t)} \left[ 1-\theta+2 \theta  \mathrm{e}^{-\lambda  (x+t)}\right]}{1-\left[1-  \frac{\gamma}{1+\gamma} \mathrm{e}^{-\lambda t}(1-\theta+\theta \mathrm{e}^{-\lambda t})\right]^{\frac{1}{\gamma}+1}}.
\end{eqnarray*}
The associated hrf is given by
\begin{eqnarray*}
h_{R_{(t)}}(x) = \frac{\lambda\left[1 -  \frac{\gamma}{1+\gamma} \mathrm{e}^{-\lambda (x+t)}\left\{1-\theta+\theta  \mathrm{e}^{-\lambda (x+t)}\right\}\right]^{\frac{1}{\gamma }}\mathrm{e}^{-\lambda  (x+t)} \left[ 1-\theta+2 \theta  \mathrm{e}^{-\lambda (x+t)}\right]}{1-\left[ 1-  \frac{\gamma}{1+\gamma} \mathrm{e}^{-\lambda (x+t)}\left\{1-\theta+\theta  \mathrm{e}^{-\lambda  (x+t)}\right\}\right]^{\frac{1}{\gamma}+1}}.
\end{eqnarray*}
The mean residual life is defined as
\begin{eqnarray*}
K(t)=E(R_{(t)})=\frac{1}{S(t)}\int_{t}^{+\infty}x f(x) \mathrm{d}x-t=\frac{1}{S(t)}\left[E(X)-J_1(t)\right]-t,
\end{eqnarray*}
where $f(x)$ is given by \eqref{pdf0}, $S(t)$ is mentioned in \eqref{surv0}, $E(X)$ is given by \eqref{mom} and $J_1(t)$ is stated in Lemma \ref{lem1}.

Further, the variance residual life is given by
\begin{align*}
V(t)&=Var(R_{(t)})=\frac{2}{S(t)}\int_{t}^{+\infty}xS(x)\mathrm{d}x-2tK(t)-\left[K(t)\right]^2\\
& = \frac{1}{S(t)}\left[E(X^2)-J_2(t)\right]-t^2-2 t K(t)-\left[K(t)\right]^2,
\end{align*}
where $E(X^2)$ is given by \eqref{mom} and $J_2(t)$ is given by Lemma \ref{lem1}. Some numerical values for the mean residual life are displayed in Table \ref{MRLF} for various choices of the parameters $\gamma $ and $\theta $ at the time points $t=1,3,5,7,10.$ It can be seen that, the mean residual life increases  with increasing the time points t, also decreases with increasing $\gamma$ and $\theta$.

\begin{table}
\centering
 \caption{Mean residual life function for arbitrary parameter values with $\lambda=1$.} \label{MRLF}
\scriptsize{\begin{tabular}{llllllllll}
\noalign{\smallskip}\hline\noalign{\smallskip}
$ \gamma \downarrow \theta=0.5$& $ t\rightarrow$  1.0 & 3.0 & 5.0 & 7.0 & 10 \\
\hline
0.1& 0.918095 & 0.982144 & 0.997423 & 0.999648 & 0.999982 &  \\
0.7& 0.900668 & 0.980089 & 0.997152 & 0.999612 & 0.999981 &  \\
1.1& 0.894316 & 0.979369 & 0.997057 & 0.999599 & 0.999980 &  \\
1.6& 0.889012 & 0.978778 & 0.996979 & 0.999588 & 0.999979 &  \\
2.0& 0.885996 & 0.978447 & 0.996936 & 0.999582 & 0.999979 &  \\ \hline
$ \theta  \downarrow \gamma=0.5 $& $ t\rightarrow$  1.0 & 3.0 & 5.0 & 7.0 & 10 \\
 \hline
-0.9 & 1.219460 & 1.027861 & 1.003735 & 1.000504 & 1.000025 \\
-0.5 & 1.162075 & 1.020915 & 1.002810 & 1.000380 & 1.000018 \\
-0.1 & 1.088250 & 1.011465 & 1.001542 & 1.000208 & 1.000010 \\
0.1  & 1.040826 & 1.004807 & 1.000638 & 1.000086 & 1.000004  \\
0.5  & 0.905030 & 0.980593 & 0.997218 & 0.999621 & 0.999981  \\
1.0  & 0.511566 & 0.500207 & 0.500004 & 0.500000 & 0.500001 \\ \hline
 \noalign{\smallskip}
\end{tabular}}
\end{table}

\subsection{Reversed residual life function}

The reverse residual life is described by the conditional random variable $\overline{R}_{(t)}=t-X \mid X\le t$, $t\ge 0$. Using \eqref{cdf0}, the survival function of the reversed residual lifetime $\overline{R}_{(t)}$ for the NWTE distribution is given by
$$S_{\overline{R}_{(t)}}(x)=\frac{F(t-x)}{F(t)}=\frac{\left[ 1-  \frac{\gamma}{1+\gamma} \mathrm{e}^{-\lambda (t-x)}\left\{1-\theta+\theta \mathrm{e}^{-\lambda (t-x)}\right\}\right] ^{\frac{1}{\gamma}+1}}{\left[1-  \frac{\gamma}{1+\gamma}  \mathrm{e}^{-\lambda t}(1-\theta+\theta \mathrm{e}^{-\lambda  t})\right] ^{\frac{1}{\gamma}+1}}, \quad 0\le x\le t.$$
The associated cdf is given by
$$F_{\overline{R}_{(t)}}(x)=\frac{\left[ 1-  \frac{\gamma}{1+\gamma} \mathrm{e}^{-\lambda t}(1-\theta+\theta \mathrm{e}^{-\lambda t})\right] ^{\frac{1}{\gamma}+1}-\left[ 1-  \frac{\gamma}{1+\gamma} \mathrm{e}^{-\lambda (t-x)}\left\{1-\theta+\theta  \mathrm{e}^{-\lambda  (t-x)}\right\}\right] ^{\frac{1}{\gamma}+1}}{\left[1-  \frac{\gamma}{1+\gamma} \mathrm{e}^{-\lambda t}(1-\theta+\theta \mathrm{e}^{-\lambda t})\right]^{\frac{1}{\gamma}+1}}.$$
The corresponding pdf is obtained as
\begin{eqnarray*}
f_{\overline{R}_{(t)}}(x) = \frac{\lambda\left[  1 -  \frac{\gamma}{1+\gamma}  \mathrm{e}^{-\lambda (t-x)}\left\{1-\theta+\theta  \mathrm{e}^{-\lambda   (t-x)}\right\}\right] ^{\frac{1}{\gamma }} \mathrm{e}^{-\lambda  (t-x)} \left[ 1-\theta+2 \theta  \mathrm{e}^{-\lambda  (t-x)}\right]}{\left[ 1-  \frac{\gamma}{1+\gamma}  \mathrm{e}^{-\lambda  t}(1-\theta+\theta  \mathrm{e}^{-\lambda  t})\right]^{\frac{1}{\gamma}+1}}.
\end{eqnarray*}
The associated hrf is given by
\begin{eqnarray*}
h_{\overline{R}_{(t)}}(x) = \frac{\lambda\left[ 1 -  \frac{\gamma}{1+\gamma} \mathrm{e}^{-\lambda (t-x)}\left\{1-\theta+\theta  \mathrm{e}^{-\lambda  (t-x)}\right\}\right] ^{
\frac{1}{\gamma }} \mathrm{e}^{-\lambda (t-x)} \left[ 1-\theta+2 \theta  \mathrm{e}^{-\lambda  (t-x)}\right]}{\left[ 1-  \frac{\gamma}{1+\gamma}  \mathrm{e}^{-\lambda  (t-x)}\left\{1-\theta+\theta  \mathrm{e}^{-\lambda  (t-x)}\right\}\right]^{\frac{1}{\gamma}+1}}.
\end{eqnarray*}
Moreover, the mean reversed residual life is defined as
\begin{eqnarray*}
L(t)=E(\overline{R}_{(t)})=t-\frac{1}{F(t)}\int_{0}^{t}xf(x)\mathrm{d}x=t-\frac{J_1(t)}{F(t)},
\end{eqnarray*}
where $f(x)$ is given by \eqref{pdf0}, $F(t)$ is defined by \eqref{cdf0} and $J_1(t)$ is given by Lemma \ref{lem1}.

Also, the variance reversed residual life is given by
\begin{align*}
W(t)&=Var(\overline{R}_{(t)})=2tL(t)-(L(t))^2-\frac{2}{F(t)}\int_{0}^{t}xF(x)\mathrm{d}x\\
& = 2tL(t)-(L(t))^2-t^2+\frac{J_2(t)}{F(t)},
\end{align*}
where $J_2(t)$ is given by Lemma \ref{lem1}.

In Table \ref{RMRLF}, we give some numerical values for the mean reversed residual life with different choices of the parameters $\gamma$ and $\theta$ at the time points $t=1,3,5,7,10.$ From this table, the mean reversed residual life increases with increasing the time points $t$ and with increasing $\gamma$ and $\theta$.

\begin{table}
\centering
 \caption{Mean reversed residual residue life function for arbitrary parameter values with $\lambda=1$.} \label{RMRLF}
\scriptsize{\begin{tabular}{llllllllll}
\noalign{\smallskip}\hline\noalign{\smallskip}
$ \gamma \downarrow \theta=0.5$& $ t\rightarrow$  1.0 & 3.0 & 5.0 & 7.0 & 10 \\
\hline
0.1& 0.568855 & 2.180127 & 4.080871 & 6.059352 & 9.054745  \\
0.7& 0.579174 & 2.217077 & 4.126584 & 6.107094 & 9.102934  \\
1.1& 0.583962 & 2.232563 & 4.145442 & 6.126726 & 9.122736  \\
1.6& 0.588572 & 2.246646 & 4.162445 & 6.144397 & 9.140554  \\
2.0& 0.591508 & 2.255231 & 4.172744 & 6.155087 & 9.151330  \\ \hline
$ \theta  \downarrow \gamma=0.5 $& $ t\rightarrow$  1.0 & 3.0 & 5.0 & 7.0 & 10 \\
 \hline
-0.9 & 0.415915 & 1.657881 & 3.395545 & 5.331680 & 8.317248 \\
-0.5 & 0.482478 & 1.812299 & 3.589379 & 5.536490 & 8.524715 \\
-0.1 & 0.528625 & 1.968720 & 3.792343 & 5.751628 & 8.742699 \\
0.1  & 0.546765 & 2.047690 & 3.897321 & 5.863170 & 8.855739  \\
0.5  & 0.576254 & 2.207155 & 4.114411 & 6.094402 & 9.090128  \\
1.0  & 0.604164 & 2.409363 & 4.399438 & 6.399130 & 9.399122 \\ \hline
 \noalign{\smallskip}
\end{tabular}}
\end{table}

\section{Estimation}\label{estim}
%Estimating the unknown parameters of a distribution is an essential issue in applied statistics. Several approaches for parameter estimation were proposed in the literature but the maximum likelihood method is the most commonly employed. The MLEs enjoy desirable pro\-perties and can be used when constructing confidence intervals and also in test statistics. The normal approximation for these estimators in large sample distribution theory is easily handled either analytically or numerically.
When the parameters $\lambda$, $\gamma$ and $\theta$ of the NWTE distribution need to be estimated, several estimation approaches are possible. In this section, we investigate the maximum likelihood estimates (MLEs) of  these parameters. Then we propose three goodness-of-fit statistics to compare the densities fitted to any data set.

\subsection{Maximum likelihood estimation}
%By using the log-likelihood of the sample in conjunction with the NMaximize command in the symbolic computational package Mathematica, we can estimate
%the unknown parameters of the new distribution.
Let $(x_1,\ldots,x_n)$ be a random samples of size $n$ from the NWTE distribution. Set $\Theta=(\lambda, \gamma, \theta)^{T}$, then the MLE of $\Theta$ can be determined by maximizing  the log-likelihood function $\ell(\Theta)$ given by
\begin{align*}
\ell(\Theta)&=n\left[\log(\lambda)+\log(\gamma +1)- \log\left([1+\gamma ]^{\frac{1}{\gamma }+1}-1\right)\right]+\frac{1}{\gamma }\sum _{i=1}^n \log\left[1+\gamma -\gamma  \mathrm{e}^{- \lambda  x_i}\left(1-\theta +\theta  \mathrm{e}^{- \lambda  x_i}\right)\right]\\ &\ \ \ -\lambda \sum _{i=1}^n  x_i+\sum _{i=1}^n \log\left(1-\theta +2\theta  \mathrm{e}^{- \lambda  x_i}\right).
\end{align*}
Alternatively, by differentiating $\ell(\Theta)$, the MLE of $\Theta$ can be obtained by solving  the nonlinear log-likelihood system equations  given by
\begin{align*}
\frac{\partial \ell (\Theta)}{\partial \lambda}& =\frac{n}{\lambda }-\sum _{i=1}^n x_i -\sum _{i=1}^n\frac{2 \mathrm{e}^{-\lambda  x_i} \theta  x_i}{1-\theta +2 \mathrm{e}^{-\lambda  x_i} \theta }+\sum _{i=1}^n \frac{2 \mathrm{e}^{-2 \lambda  x_i}  \theta +\mathrm{e}^{-\lambda  x_i} (1-\theta) }{1+\gamma -\mathrm{e}^{-\lambda  x_i} \gamma  \left(1-\theta +\mathrm{e}^{-\lambda  x_i} \theta \right)}x_i=0,\\
\frac{\partial \ell (\Theta )}{\partial \gamma }& =n \left[\frac{1}{1+\gamma }+\frac{(1+\gamma )^{1+\frac{1}{\gamma }} \left\{-\gamma +\log(1+\gamma)\right\}}{\gamma ^2 \left\{-1+(1+\gamma )^{1+\frac{1}{\gamma }}\right\}}\right]+\frac{1}{\gamma }\sum _{i=1}^n \frac{1-\mathrm{e}^{-\lambda  x_i} \left(1-\theta +\mathrm{e}^{-\lambda  x_i} \theta \right)}{1+\gamma -\mathrm{e}^{-\lambda  x_i} \gamma  \left(1-\theta +\mathrm{e}^{-\lambda  x_i} \theta \right)}\\ &\ \ \ -\frac{1}{\gamma ^2}\sum _{i=1}^n \log\left[1+\gamma -\mathrm{e}^{-\lambda  x_i} \gamma  \left(1-\theta +\mathrm{e}^{-\lambda  x_i} \theta \right)\right]=0, \\
\frac{\partial \ell (\Theta )}{\partial \theta}&=\sum _{i=1}^n \frac{-1+2  \mathrm{e}^{-\lambda  x_i}}{1-\theta +2  \mathrm{e}^{-\lambda  x_i} \theta }- \sum _{i=1}^n\frac{\mathrm{e}^{-\lambda  x_i} \left(-1+\mathrm{e}^{-\lambda  x_i}\right) }{1+\gamma -\mathrm{e}^{-\lambda  x_i} \gamma  \left(1-\theta +\mathrm{e}^{-\lambda  x_i} \theta \right)}=0.
\end{align*}
By solving the equations above simultaneously, we can obtain the MLE $\hat \Theta$ of $\Theta$, with components providing the MLEs $\hat \lambda$, $\hat \gamma$, $\hat \theta$ of $\lambda$, $\gamma$, $\theta$ respectively. Various numerical iterative techniques can be used for estimating these parameters. In this study, we consider the iterative algorithm inherent to the NMaximize command in the symbolic computational package  \textit{Mathematica}. %and the global maxima of the log-likelihood can be investigated by setting different starting values for the parameters.

Under some regularity conditions, the asymptotic normality of the MLEs is guaranteed; the asymptotic distribution of $(\hat{\Theta}-\Theta )$ is $N_{3}({0}_3,{I}(\Theta )^{-1})$, where ${I}(\Theta )=E({J}(\Theta ))$ denotes the expectation of the information matrix: ${J}(\Theta )=\{J_{rs}(\Theta )\}$, $(r,s)\in \{\lambda, \gamma, \theta\}$. Thus confidence intervals or Wald test can be constructed for the parameters.

Other estimation methods can be considered, as those performed in \cite{ali} for instance.   

\subsection{Goodness-of-fit statistics}
In order to evaluate the goodness-of-fit of the fitted models, we consider the Anderson-Darling statistics ($A^{\ast }$), the Cram\'{e}r-von Mises statistics ($W^{\ast }$) and the Kolmogrov-Smirnov statistics (K-S), given by
\begin{align*}
A^{\ast }& =\left( \frac{2.25}{n^{2}}+\frac{0.75}{n}+1\right) \left[ -n-
\frac{1}{n}\sum_{i=1}^{n}(2i-1) \log\left( z_{i}\left[
1-z_{n-i+1}\right] \right) \right] , \\
W^{\ast }& =\left( \frac{0.5}{n}+1\right) \left[ \sum_{i=1}^{n}\left(
z_{i}-\frac{2i-1}{2n}\right) ^{2}+\frac{1}{12n}\right], \\
K\text{-}S& =\max\left( \frac{i}{n}-z_{i},z_{i}-\frac{i-1}{n}\right),
\end{align*}
where $z_{i}=F(y_{i})$ and the $y_{i}^{  ,}s$ are the \emph{ordered} observations.
The associated $P$-values are determined. The better distribution in terms of fit is the one having the smallest statistics and largest $P$-values.

\section{Applications}\label{appli}
This section is devoted to the data analyses of two data sets in environmental sciences, namely hydrology, where we compare the fit of our new distributions and some well-known distributions.  The best model among them is then selected.

\subsection{Data fitting}\label{applications}
We consider the data sets: ``Ground-water data (GWD)'' described in Table 1 of Bhaumik and Gibbons \cite{Bhaumik and Gibbons} and ``Flood data (FD)'' described in Akinsete {\it et al.} \cite{Akinsete}. The data of GWD represent vinyl chloride concentrations ($n = 34$) collected from clean upgradient monitoring wells. The data of FD represent flood rates (for the years 1935--1973) ($n = 39$) for the Floyd River located in James, Iowa, USA. The descriptive statistics of both data sets are summarized in Table \ref{Descriptive}. From this table, the data are over-dispersed and having skewness and kurtosis. For each data set, the NWTE model is compared with the following distributions.
\begin{itemize}
\item The gamma distribution with pdf given by
\begin{eqnarray}
    f(x)=  \frac{x^{k-1}\mathrm{e}^{-\lambda ^{-1} x}}{ \lambda ^k \ \gamma(k)} , \quad  \ x>0, k>0, \lambda>0.\nonumber
    \end{eqnarray}
\item The Marshal-Olkin exponential distribution (MOE) \cite{Marshall and Olkin:1997} with a pdf given by
 \begin{eqnarray}
  f(x)=\frac{ \lambda   \beta   \mathrm{e}^{-\lambda  x}}{\left[1-(1-\beta ) \mathrm{e}^{-\lambda   x}\right]^2},\quad x>0 ,\beta>0 ,\lambda>0.\nonumber
    \end{eqnarray}
    \item The Nadarajah--Haghighi exponential distribution (NHE) \cite{Nadarajah and Haghighi} with a pdf given by
    \begin{eqnarray}
 f(x)=\alpha \lambda  (1+\lambda  x)^{\alpha -1}\mathrm{e}^{1-(1+\lambda  x)^{\alpha }}, \quad x>0 ,\lambda>0,   \alpha>0.\nonumber
    \end{eqnarray}
 \item The exponentiated exponential distribution (EE) \cite{gupta2} with a pdf given by
 \begin{align}
    f(x)=\alpha \lambda  \left(1-\mathrm{e}^{-\lambda  x}\right)^{\alpha -1}\mathrm{e}^{-\lambda  x}, \quad x>0,  \lambda>0,  \alpha >0.\nonumber
    \end{align}
 \item The transmuted Weibull distribution (TW)  \cite{Aryal and Tsokos:2011} with a pdf given by
 \begin{align}
    f(x)= \frac{\eta }{\sigma }\left(\frac{x}{\sigma }\right)^{\eta -1}\mathrm{e}^{-\left(\frac{x}{\sigma }\right)^{\eta }}\left[1-\lambda +2 \lambda   \mathrm{e}^{-\left(\frac{x}{\sigma }\right)^{\eta }}\right] , \quad  \ x>0, \eta>0,  \sigma>0, \ \lambda\in [-1,1].\nonumber
    \end{align}
 \item The transmuted generalized exponential distribution (TGE)  \cite{Khan_King_Hudson} with a pdf given by
 \begin{align}
    f(x)= \alpha   \theta   \mathrm{e}^{-\theta   x }\left(1-\mathrm{e}^{-\theta   x }\right)^{\alpha -1}\left[1+\lambda -2 \lambda   \left(1-\mathrm{e}^{-\theta   x}\right)^{\alpha }\right], \quad  \ x>0, \alpha>0,   \theta>0,  \lambda\in [-1,1].\nonumber
    \end{align}
\item The transmuted linear exponential distribution (TLE)  \cite{Ref tle} with a pdf given by
 \begin{align}
    f(x)= (\beta +\theta  x) \mathrm{e}^{-\left(\beta   x + \frac{\theta }{2} x^2\right)}\left[1-\lambda +2 \lambda   \mathrm{e}^{-\left(\beta  x + \frac{\theta }{2}x^2\right)}\right], \quad  \ x>0,  \beta>0,   \theta>0,   \lambda \in [-1,1]. \nonumber
    \end{align}
    \item The Kappa distribution  \cite{Mielke:1973} with a pdf given by
 \begin{align}
    f(x)= \frac{\alpha   \theta }{\beta }\left(\frac{x}{\beta }\right)^{\theta -1}\left[\alpha +\left(\frac{x}{\beta }\right)^{\alpha  \theta }\right]^{\frac{-(\alpha +1)}{\alpha }} , \quad  \ x>0, \alpha>0, \beta>0, \theta>0.\nonumber
    \end{align}
\end{itemize}

The MLEs with their standard errors  are given in Tables \ref{ground water contamination data} and \ref{flood data} for both data sets
along with the goodness-of-fit statistics for each distribution. We can see in Tables \ref{ground water contamination data} and \ref{flood data} that the NWTE distribution has the smallest statistics and the largest $P$-value; it provides the best fit among the considered distributions. This conclusion is confirmed again by Figure \ref{histogram}.

\begin{table*}[t]
\centering
 \caption{Descriptive statistics of both data sets.}\label{Descriptive}
\scriptsize{\begin{tabular}{lllllllllllll}
\noalign{\smallskip}\hline\noalign{\smallskip}
Data & Mean & Median  & SD & Kurtosis & Skewness & M1 & M2 & Min & Max \\  \hline
\textit{GWD} & 1.87941 & 1.15 & 1.95259 & 5.00541 & 1.60369 & 1.45692 & 0.8 & 0.1 & 8.0   \\ \hline
\textit{FD} & 6771.1 & 3570 & 11695.7 & 25.4436 & 4.55806 & 5872.77 & 2180 & 318 & 71500 \\
 \noalign{\smallskip}\hline
\end{tabular}}
\\ \text{\ \ \ \ \  \ \ \ \ \ \ \ SD = Standard Deviation ,  M1 = Mean deviation about the mean, }\\
\text{\ \  \ \ \ \ \ \ \  \ \ \ \ \ \ \ M2 = Mean deviation about the median}
\end{table*}

\begin{table}
\centering
 \caption{Comparison of fit of the NWTE distribution using different methods of estimation for GWD.}\label{ground water contamination data}
\scriptsize{\begin{tabular}{lllllllll}
\noalign{\smallskip}\hline\noalign{\smallskip}
&&& \small{MLE's} &&   \\
\hline
Distributions& Estimates& & & $A^* $&  $W^* $ & KS & $P$-value  \\
\hline
gamma($k, \lambda$)            & 1.062685   & 1.768549 && 0.320322 & 0.051617 & 0.097341 & 0.904069\\
                                & (0.228152) & (0.480351) \\
%Weibull($k, \lambda$)         & 1.010223   & 1.887932 && 0.289343 & 0.0439367 & 0.0918407 & 0.93661\\
%                                & (0.132182) & (0.41879) \\
MOE($\beta, \lambda$)         & 0.822837   & 0.481811 && 0.246723 & 0.0322959 & 0.0876375 & 0.956463\\
                                & (0.484526) & (0.173277) \\
NHE($\alpha, \lambda$)        & 0.900308   & 0.631993 && 0.24749  & 0.0332962 & 0.0838071 & 0.97073  \\
                               & (0.344202)   & (0.415966)  \\
EE($\alpha, \lambda$)         & 1.076412   & 0.558049 && 0.325543 & 0.0528397 & 0.0977771 & 0.901191 \\
                               & (0.247363) & (0.124162) \\
TW($\eta, \sigma, \lambda$)    & 1.076390   & 2.392822 &  0.418645 & 0.255586  & 0.0384952 & 0.083499  & 0.971723 \\
                               & (0.146953) & (0.942422) & (0.606964) \\
TGE($\alpha, \theta, \lambda$) & 1.160258   & 0.480348 &  0.395341 & 0.261748  & 0.0407411 & 0.0889903 & 0.950556 \\
                               & (0.220835) & (0.217127) & (0.501833) \\
TLE($\beta, \theta, \lambda$)  & 0.404133   & 0.013541 &  0.391774 & 0.248782  & 0.0336508 & 0.0825401 & 0.97467 \\
                               & (0.267787) & (0.047316) & (0.373851) \\
Kappa($\alpha, \theta, \beta$) & 1.428222   & 1.236928 &  1.304859 & 0.248987  & 0.037697  & 0.0871235 & 0.958587\\
                               & (1.0934)   & (0.565213) & (0.489635) \\
NWTE($\lambda, \gamma,\theta$) & 0.465010   & 9.179478 &  0.344129 & \textbf{0.234947}&\textbf{0.0320984}&\textbf{0.0793788}&\textbf{0.982912}  \\
                               & (0.204453) &(50.0745) &  (0.75164)\\
 \noalign{\smallskip}\hline\noalign{\smallskip}
\end{tabular}}
\end{table}

\begin{table}
\centering
 \caption{Comparison of fit of the NWTE distribution using different methods of estimation for FD.}\label{flood data}
\scriptsize{\begin{tabular}{lllllllll}
\noalign{\smallskip}\hline\noalign{\smallskip}
&&& \small{MLE's} &&   \\
\hline
Distributions& Estimates& & & $A^* $&  $W^* $ & KS & $P$-value  \\
\hline
gamma($k, \lambda$)           & 0.919695   & 7362.32 && 1.2662 & 0.210505 & 0.147184 & 0.366821\\
                                & (0.182011) & (1906.34) \\
%Weibull($k, \lambda$) &0.871574 & 6189.66 && 1.07543 & 0.151308 & 0.127122 & 0.554139\\
MOE($\beta, \lambda$) & 0.293231 & 0.000069 && 1.08796 & 0.148357 & 0.142366 & 0.407993\\
                          & (0.205071) & (0.000038) \\
NHE($\alpha, \lambda$) & 0.609712 & 0.000374 && 0.900554 & 0.117556 & 0.136633 & 0.460365 \\
                        & (0.127014)   & (0.000163)  \\
EE($\alpha, \lambda$) & 0.968901 & 0.000144 && 1.34484 & 0.23414 & 0.150306 & 0.341607\\
                        & (0.212611) & (0.000032) \\
TW($\eta, \sigma, \lambda$)  & 0.961730 & 10522.55 & 0.805980  & 0.780029 & 0.102975 & 0.11102 & 0.722328 \\
                            & (0.105471) & (2312.24) & (0.209698) \\
TGE($\alpha, \theta, \lambda$)  & 1.081744 & 0.000103 & 0.800145  & 0.795675 & 0.121992 & 0.108377 & 0.749403 \\
                                & (0.215061) & (0.000032) & (0.21265) \\
TLE($\beta, \theta, \lambda$)  & 0.000095 & 8.1$\times10^{-12}$ & 0.801661  & 0.782863 & 0.110629 & 0.108711 & 0.74601 \\
                               & (0.000021) & (1.3$\times10^{-9}$) & (0.207988) \\
Kappa($\alpha, \theta, \beta$)  &  0.038151 & 27.732540 & 1496.464 & 3.2369 & 0.658064 & 0.218663 & 0.048011 \\
                                & (0.095815)   & (67.9672) & (253.781) \\
NWTE($\lambda, \gamma,\theta$)  & 0.000097  & 29.109413 & 0.808284  &\textbf{0.775836}&\textbf{0.111574}&\textbf{0.108037}&\textbf{0.75285}  \\
                                &(0.000022) &(154.575) &  (0.199919)\\
 \noalign{\smallskip}\hline\noalign{\smallskip}
\end{tabular}}
\end{table}

\begin{figure}[h]
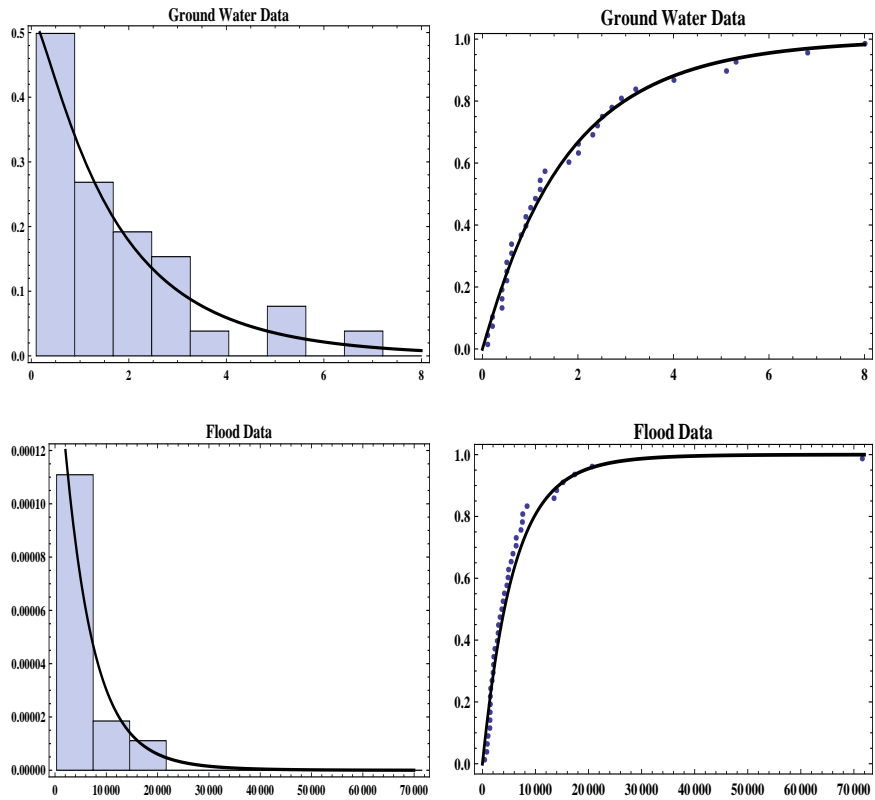

\centering
\includegraphics[width=5.5cm,height=5cm]{pdf_hist_GWCD.eps}
\hspace{0.15cm}
\includegraphics[width=5.5cm,height=5cm]{emp_cdf_GWCD.eps}\\ \bigskip
\includegraphics[width=5.5cm,height=5cm]{pdf_hist_Flood.eps}
\hspace{0.15cm}
\includegraphics[width=5.5cm,height=5cm]{emp_cdf_Flood.eps}
\caption{Plots of the estimated pdfs and cdfs of the NWTE distribution, superimposed on the histograms and empirical cdfs, respectively, for the used data sets.}\label{histogram}
\end{figure}

\subsection{Hydrologic parameters}
The nice fit properties of the NWTE distribution motivates the determination of three important hydrologic parameters for the considered data sets: the return level, the conditional mean of the event data and the mean deviation about the return level. This recommends the NWTE as a hydrologic probability model, such as the most known distributions: Kappa and gamma distributions. 
\subsubsection{Return level}

A return period is an estimate of the likelihood of an event, such as a flood or a river discharge flow to occur. The probability, return period and return level of flood data and ground water contamination data can be estimated using the equation; $P(x_{T})=1-F(x_{T})$, $T=1/P(x_{T})$ and $x_{T}=F^{-1}\left( 1-\frac{1}{T}\right),$ respectively, where $F^{-1}\left( \cdot \right) $ is the inverse of
the cdf $F(x)$ and $P(x_{T})$ called exceedance probability (see, for instance, \cite{Ahammed, Nadarajah and Choi}). The return level $x_{T}$ under the NWTE distribution is obtained by
\begin{equation*}
x_{T}=- \frac{1}{\lambda}\log\left\lbrace 1-\left(\frac{1+\theta-\sqrt{(1+\theta)^2-4\frac{\theta}{\gamma}\left\lbrack\left\{ \left( 1-\frac{1}{T}\right)\left( \left[ 1+\gamma \right]^{\frac{1}{\gamma}+1}-1 \right) +1\right\}^{\frac{\gamma}{\gamma+1}}-1\right\rbrack }}{2   \theta}\right)\right\rbrace,
\end{equation*}
where $x_{T}>0$ and $T\geq 1.$ Table \ref{Return level estimates} provides estimates of the return level $x_{T}$ of the ground water contamination data and flood data, respectively, for the return periods $T=2,5,10,20,50,100,200$ years based on replacing the parameters $\lambda, \gamma, \theta$ by their ML estimates in Tables \ref{ground water contamination data} and \ref{flood data}.
Moreover, the return periods
%(expected waiting time that will be taken for an exceeding flood and runoff to occur)
for some largest values of the both data sets are reported in Table \ref{Return periods largest values} and computed using $T=1/P(x_{T})$, where $%
P(x_{T})=S(x_{T})$ is the estimated survival function of the NWTE distribution given by
\begin{eqnarray*}\label{survival function}
S(x_{T})=\frac{\left( 1+ \hat{\gamma} \right) ^{\frac{1}{\hat{\gamma} }+1} \left[ 1- \left\{ 1-  \frac{\hat{\gamma}}{1+\hat{\gamma}} \mathrm{e}^{-\hat{\lambda} x}\left(1-\hat{\theta}+\hat{\theta} \mathrm{e}^{-\hat{\lambda} x}\right)\right\} ^{\frac{1}{\hat{\gamma} }+1}\right]
}{\left( 1+ \hat{\gamma} \right) ^{\frac{1}{\hat{\gamma} }+1}-1},
\end{eqnarray*}
where $\hat{\lambda}, \hat{\gamma}, \hat{\theta},$ are the ML estimates corresponding the used data and are given in Tables \ref{ground water contamination data} and \ref{flood data}.
\subsubsection{Conditional mean of the event data}

The conditional mean of the event (GWD or FD) data based on equation \eqref{cm} is defined as
\begin{equation*}
E(X\mid X>Q)=\frac{1}{S(Q)}\int_{Q}^{\infty }x f(x) \mathrm{d}x,
\end{equation*}
where $S(x)$ is the survival function of the NWTE distribution and $Q$ is a value of the event. For example, for the GWD $E(X\mid X>8.0$ $m3/s)=10.1378$  and FD  $E(X\mid X>71500$  $mm)=81788.2.$

\subsubsection{Mean deviation about the return level}

The mean deviation about the return level is the mean of the distances of each value from their return level and it is a measure of the scatter in a population. The mean deviation about return level can be defined as
\begin{equation*}
\xi =\int_{0}^{\infty }\left\vert x -x_{T}\right\vert
f(x)\mathrm{d}x=2x_{T}F(x_{T})-x_{T}+\mu -2m(x_{T}),
\end{equation*}
where $m(x_{T})=\int_{0}^{x_{T}}x f(x)\mathrm{d}x$ and $f(x)$ is the pdf of the NWTE distribution. Table \ref{Return level estimates} provides mean deviation about the return level $m(x_{T})$ for the return periods $T=2,5,10,20,50,$ $100,200$ for the GWD and FD distributions, respectively, noting that we replace the parameters in $f(x)$ by their ML estimates for the corresponding data.

\begin{table}
\centering
 \caption{Return level estimates $\hat{x}_{T}$ for T and mean deviation about it.} \label{Return level estimates}
\scriptsize{\begin{tabular}{llllllll}
\noalign{\smallskip}\noalign{\smallskip}\hline
& GWD && FD \\ \hline
T & $x_{T} $ & \ \ \ $\xi $ & $x_{T}$ & \ \ \ $\xi $ \\  \hline
$2$ & \ \ \ 1.24872 & \ \ \ 1.31433 & 4143.93 &  4422.2\\
5 & \ \ \ 2.97016 & \ \ \ 1.9147 & 9802.81 & 6392.9 \\
10 & \ \ \ 4.34956 & \ \ \ 2.89705 & 14378.1 & 9653.1 \\
20 & \ \ \ 5.7765 & \ \ \ 4.11844 & 19306.6 & 13873.8 \\
50 &  \ \ \ 7.70555 & \ \ \ 5.92134 & 26491.6 & 20592.4 \\
100 & \ \ \ 9.18173 & \ \ \ 7.35495 & 32468.3 & 26397.4 \\
200 & \ \ \ 10.665 & \ \ \ 8.81682  & 38858.8 & 32695.9 \\
 \noalign{\smallskip}\hline
\end{tabular}}
\end{table}

\begin{table}
\centering
 \caption{Return periods for some largest values of the GWD and FD.} \label{Return periods largest values}
\scriptsize{\begin{tabular}{llllllll}
\noalign{\smallskip}\hline\noalign{\smallskip}
Values of the GWD & Return period& Values of the FD & Return period \\ \hline
4.0   & 8.41121     & 13900 & 9.32193 \\
5.1   & 14.4311     & 15100 & 11.1074 \\
5.3   & 15.8987     & 17300 & 15.1842 \\
6.8   & 32.5876     & 20600 & 23.7727 \\
8.0   & 57.4359     & 71500 & 5183.11 \\
\noalign{\smallskip}\hline
\end{tabular}}
\end{table}
\clearpage 

\section{Concluding remarks}\label{conclusion}
In this article, we introduce and study  a new three-parameter distribution, called the NWTE distribution, having the feature to combine the respective flexibility of the EGNB2 and transmuted exponential distributions.  Some of its mathematical properties are discussed, including the hazard rate function, moments, the moment generating function, the quantile function, various entropy measures and (reversed) residual life functions. Then, the NWTE is investigated from both the theoretical and practical aspects. In particular, the estimation of the parameters is performed with the method of maximum likelihood. By considering two environmental data sets, it is shown that it can provide better fits in comparison to eight well-established statistical models. Thanks to its high degree of flexibility, we believe that the NWTE model can found a place of choice for the analysis of data in other areas including engineering, medicine, science, ecology, biology and finance.

%{\bf Acknowledgments:} The authors would like to thank the referees for their comments. 


\begin{thebibliography}{99}

\bibitem{Ahammed}
F. Ahammed, G. A. Hewa and J. R. Argue, Variability of annual daily maximum rainfall of Dhaka, Bangladesh, \emph{Atmospheric Research}, 137, 176-182, 2014.

\bibitem{Akinsete}
A. Akinsete, F. Famoye and C. Lee, \emph{The beta-Pareto distribution}, Statistics, 42, 547-563, 2008.

\bibitem{ali}
M. Alizadeh, M. Emadi and M. Doostparast, \emph{A new two-parameter lifetime distribution: properties, applications and different method of estimations}, Stat., Optim. Inf. Comput., 7, June 2019, 291-310. 

\bibitem{alzaa}
A. Alzaatreh, C. Lee and F. Famoye, \emph{A new method for generating families of continuous distributions}, 
METRON, 71,63-79, 2013.

\bibitem{Aryal and Tsokos:2011}
G. R. Aryal and C. P. Tsokos, \emph{Transmuted Weibull distribution: A generalization of the Weibull probability distribution}, Eur J Pure Appl Math. 4, 89-102, 2011.


\bibitem{Bhaumik and Gibbons}
D. K. Bhaumik and R. D. Gibbons, \emph{One-Sided Approximate Prediction Intervals for at Least p of m Observations From a Gamma Population at Each of r Locations}, Technometrics, 48,  112-119, 2006.


\bibitem{gupta2}
R. D. Gupta and D. Kundu, \emph{Generalized
exponential distributions}, Australian and New Zealand
Journal of Statistics, 41, (2), 173-188, 1999.

\bibitem{Khan_King_Hudson}
M. S. Khan, R. King and I. L. Hudson, \emph{Transmuted generalized exponential distribution: A generalization of the exponential distribution with applications to survival data}, Communications In Statistics - Simulation And Computation.  http://dx.doi.org/10.1080/03610918.2015.1118503, 2017. 




\bibitem{louzada}
F. Louzada, P. Borges and Y. Cancho, \emph{The exponential negative-binomial distribution: A continuous bridge between under
and over dispersion on a lifetime modelling structure}, J. Statist. Adv. Theory Applic. 7, 67-83, 2012. 

\bibitem{kumar1}
A. Mahdavi and D. Kundu, A new method for generating distributions with an application to exponential distribution, Comm.
Statist. Theory Methods, 46, 13, 6543-6557, 2017. 

\bibitem{Marshall and Olkin:1997}
A. W. Marshall and I. Olkin, \emph{€œA new method for adding a parameter to a family of distributions with application to the exponential and Weibull families}€, Biometrika, 84, 641-652, 1997.

\bibitem{Mathai and Haubold:2008}
A. M. Mathai and H. J. Haubold, \emph{On generalized distributions and pathways}, Physics Letters A 372, 2109-2113, 2008.

\bibitem{Mielke:1973}
P. W. Mielke, \emph{Another Family of Distributions for describing and analyzing precipitation data}, Journal of Applied Meteorology, 12, (2), 275-280, 1973. 

\bibitem{Nadarajah and Choi}
S. Nadarajah and D. Choi, \emph{Maximum daily rainfall in South Korea}, J. Earth Syst. Sci. 116,  311-320, 2007.

\bibitem{Nadarajah and Haghighi}
S. Nadarajah and  F. Haghighi, \emph{An extension of the exponential distribution}, Statistics, 45, 543-558, 2011.

\bibitem{ow}
E.A. Owoloko, P.E. Oguntunde and A.O. Adejumo, \emph{Performance rating of the
transmuted exponential distribution: an analytical approach}, Springer
plus, 4, 818, 2015.

\bibitem{percon}
A. Percontini, G. M. Cordeiro and M. Bourguignon, \emph{The G-negative binomial family: General properties and applications}, Adv.
Applic. Statist., 35, 127-160, 2013.

\bibitem{renyi}
A. R\'enyi, \emph{On measures of entropy and information}, In Proceedings of the 4th Berkeley symposium on mathematics, statistics and probability, 547-561, 1960.

\bibitem{saphir}
A. Saghir, G. G. Hamedani, S. Tazeem and A. Khadim, \emph{Weighted distributions: A brief review, perspective and characterizations}, International Journal of Statistics and Probability, 6, 3, 109-131, 2017.

\bibitem{shannon}
C. A. Shannon, \emph{Mathematical theory of communication}, Bell Sys. Tech, 27, 379-423, 1948

\bibitem{shaw}
W. T. Shaw and I. R. C. Buckley, \emph{The alchemy of probability distributions: beyond Gram-Charlier expansions, and a skew-kurtotic-normal distribution from a rank transmutation map}, arXiv preprint arXiv:0901.0434, 2009.

\bibitem{tahir}
M. H. Tahir and G. M. Cordeiro, \emph{Compounding of distributions: a survey and new generalized classes}, Journal of Statistical Distributions and Applications, 3, (1), 1-35, 2016. 

\bibitem{Ref tle}
Y. Tiana, M. Tian and Q. Zhu, \emph{Transmuted linear exponential distribution: A new generalization of the linear exponential distribution}, Communications in Statistics - Simulation and Computation, 43, 2661-2677, 2014.

\end{thebibliography}
\end{document}